\documentclass{amsart}

\usepackage{amssymb}
\usepackage{amsmath}
\usepackage{amsthm}
\usepackage{enumerate}
\usepackage{tikz-cd}
\usepackage[all]{xy}

\theoremstyle{plain}
\newtheorem{theorem}{Theorem}[section]

\newtheorem{proposition}[theorem]{Proposition}
\newtheorem{corollary}[theorem]{Corollary}
\newtheorem{conj}[theorem]{Conjecture}
\newtheorem{question}[theorem]{Question}

\theoremstyle{definition}
\newtheorem{definition}[theorem]{Definition}
\newtheorem{remark}[theorem]{Remark}
\newtheorem{example}[theorem]{Example}
\newtheorem*{ack}{Acknowledgements}
\newtheorem*{data}{Data Management}

\title{On the Birch--Swinnerton-Dyer conjecture and Schur indices}
\author{Matthew Bisatt and Vladimir Dokchitser}
\address{Faculty of Natural and Mathematical Sciences, Strand Campus, King's College London, London, WC2R 2LS, United Kingdom}
\email{matthew.bisatt@kcl.ac.uk}
\email{vladimir.dokchitser@kcl.ac.uk}


\input cyracc.def       
\font\tencyr=wncyr10
\def\sha{\text{\tencyr\cyracc{Sh}}}

\def\Q{{\mathbb Q}}
\def\Z{{\mathbb Z}}
\def\C{{\mathbb C}}
\def\F{{\mathbb F}}
\def\triv{{\mathbf 1}}
\def\GL{\text{GL}}
\def\cF{\mathcal{F}}
\def\cK{\mathcal{K}}
\def\cO{\mathcal{O}}

\global\long\def\qq{\mathbb{Q}}

\global\long\def\zz{\mathbb{Z}}
\global\long\def\Gal{\operatorname{Gal}}
\global\long\def\Ind{\operatorname{Ind}}
\global\long\def\Res{\operatorname{Res}}
\global\long\def\ord{\operatorname{ord}}

\global\long\def\trace{\operatorname{tr}}
\global\long\def\Aut{\operatorname{Aut}}

\begin{document}

\begin{abstract}
For every odd prime $p$, we exhibit families of irreducible Artin representations $\tau$ with the property that for every elliptic curve $E$ the order of the zero of the twisted $L$-function $L(E,\tau,s)$ at $s\!=\!1$ must be a multiple~of~$p$. Analogously, the multiplicity of $\tau$ in the Selmer group of $E$ must also be divisible by $p$. We give further examples where $\tau$ can moreover be twisted by any character that factors through the $p$-cyclotomic extension, and examples where the $L$-functions are those of twists of certain Hilbert modular forms by Dirichlet charaters. These results are conjectural, and rely on a standard generalisation of the Birch--Swinnerton-Dyer conjecture. Our main tool is the theory of Schur indices from representation theory.
\end{abstract}

\maketitle

\vskip -0.9cm \phantom{.}
\section{Making the analytic rank divisible by $p$}

There is a standard ``minimalist conjecture'' that generically the $L$-function of an elliptic curve vanishes to order 0 or 1 at $s\!=\!1$,
depending on the sign in the functional equation. As we will illustrate, this has to be used with some caution: even when the associated Galois representation is irreducible, certain $L$-functions cannot vanish to order 1 at $s\!=\!1$ --- the order of their zero should be a multiple of a (possibly large) integer $n$.

More precisely, we look at twists of elliptic curves $E$ by Artin representations~$\tau$ and their $L$-functions $L(E,\tau,s)$, that is the $L$-function associated to the tensor product of $\tau$ with the Galois representation of $E$. When $\tau$ factors through $F/\Q$ this is a factor of $L(E/F,s)$, much like the Artin $L$-function $L(\tau,s)$ is a factor of the Dedekind $\zeta$-function of~$F$.

Throughout the article $p$ and $q$ will be distinct odd primes. We write $\langle \, , \, \rangle$ for the usual inner product of characters of representations of finite groups (embedding them into $\C$ if necessary): thus $\langle X,\tau \rangle$ is the multiplicity of $\tau$ in $X$ if $\tau$ is irreducible.

\begin{theorem}
\label{mainthm}
Let $E/\Q$ be an elliptic curve. 
Let $\tau$ be an irreducible faithful Artin representation of a Galois extension $F/\Q$ with $\Gal(F/\Q) \cong C_q \rtimes C_{p^n}$ non-abelian and with $p^n \!\nmid\! q{-}1$.\\
\noindent (i) If the Birch--Swinnerton-Dyer conjecture for Artin twists (Conjecture \ref{BSDDG}) holds, then 
$$
  \ord\limits_{s=1} L(E,\tau,s) \equiv 0 \mod{p}.
$$
(ii) If the $\ell$-primary part of the Tate--Shafarevich group $\sha(E/F)[\ell^{\infty}]$ is finite, then
$$
  \langle X_{\ell}(E/F),\tau \rangle  \equiv 0 \mod{p},
$$ 
where $\ell$ is any prime and $X_{\ell}(E/F)$ is the Pontryagin dual of the $\ell^{\infty}$-Selmer group of $E/F$ tensored with $\Q_{\ell}$, viewed as a representation of $\Gal(F/\Q)$.
\end{theorem}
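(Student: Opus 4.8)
The plan is to reduce both parts to the single fact that the Schur index $m_\Q(\tau)$ of $\tau$ over $\Q$ is divisible by $p$; in fact I expect to compute $m_\Q(\tau)=p^{\min(k,\,n-k)}$, where $k:=v_p(q-1)$. Granting this, write $G:=\Gal(F/\Q)$. Since $E$ is defined over $\Q$, the module $E(F)\otimes_\Z\Q$ is a $\Q[G]$-module; decomposing it into $\Q$-irreducible summands and extending scalars to $\C$, the $\C$-irreducible $\tau$ can occur in $E(F)\otimes_\Z\C$ only inside constituents $W\otimes_\Q\C$ with $W$ a $\Q$-irreducible having $\tau$ as a constituent, and each such $W\otimes_\Q\C$ contains $\tau$ with multiplicity exactly $m_\Q(\tau)$. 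Hence $\langle E(F)\otimes_\Z\C,\tau\rangle\equiv 0\pmod{m_\Q(\tau)}$. Conjecture \ref{BSDDG} asserts $\ord_{s=1}L(E,\tau,s)=\langle E(F)\otimes_\Z\C,\tau\rangle$, giving (i). For (ii), finiteness of $\sha(E/F)[\ell^{\infty}]$ gives, via the $\ell^{\infty}$-descent exact sequence, an isomorphism of $\Q_{\ell}[G]$-modules between $X_{\ell}(E/F)$ and $(E(F)\otimes_\Z\Q)^{*}\otimes_\Q\Q_{\ell}$; since $\langle V\otimes_\Q\Q_{\ell},\tau\rangle=\langle V,\tau\rangle$ for every $\Q[G]$-module $V$ (multiplicities are unaffected by extension of scalars), $\langle X_{\ell}(E/F),\tau\rangle$ is again divisible by $m_\Q(\tau)$, hence by $p$. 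Here duals do not affect multiplicities of $\tau$, since $E(F)\otimes_\Z\Q$ has rational character.

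It remains to compute $m_\Q(\tau)$. Non-abelianness forces $p\mid q-1$, so $k=v_p(q-1)\ge 1$, and the hypothesis $p^n\nmid q-1$ gives $k\le n-1$. The image of $C_{p^n}$ in $\Aut(C_q)\cong C_{q-1}$ has order $p^k$; its kernel $Z$, the subgroup of $C_{p^n}$ of order $p^{n-k}$, is central in $G$, and $N:=C_q\times Z$ is normal with cyclic quotient $G/N\cong C_{p^k}$. By Clifford theory the faithful irreducible representations of $G$ are exactly the $\Ind_N^G(\chi\otimes\psi)$ with $\chi$ a nontrivial character of $C_q$ and $\psi$ a faithful character of $Z$; all have dimension $p^k$, and they form a single orbit under $\Gal(\overline{\Q}/\Q)$, so share a common character field and Schur index. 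We may therefore fix one such $\tau$, with character field $\Q(\tau)=\Q(\zeta_{p^{n-k}})\cdot\Q(\zeta_q)^{\langle c\rangle}$, where $c\in(\Z/q)^{\times}$ generates the order-$p^k$ image of the action and $\Q(\zeta_q)^{\langle c\rangle}$ denotes the fixed field of $\langle c\rangle\subseteq\Gal(\Q(\zeta_q)/\Q)$.

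The key step is to identify the simple component $A$ of $\Q[G]$ carrying $\tau$. Conjugation by $G/N$ sends $\chi\otimes\psi$ to $\chi^c\otimes\psi$, and $\chi^c$ is a Galois conjugate of $\chi$; hence the primitive central idempotent of $\Q[N]$ attached to $\chi\otimes\psi$ is already $G$-invariant, so $A$ equals the crossed product $\Q(\chi\otimes\psi)\ast(G/N)$. As $G/N$ is cyclic this is a cyclic algebra $A=\bigl(\,\Q(\zeta_q,\zeta_{p^{n-k}})/\Q(\tau),\ \sigma,\ \psi(g^{p^k})\,\bigr)$ of degree $p^k$ over $\Q(\tau)$, with $\sigma$ a generator of $\Gal(L/\Q(\tau))$ (where $L:=\Q(\zeta_q,\zeta_{p^{n-k}})$), $g$ a generator of the $C_{p^n}$-part of $G$, and factor $\psi(g^{p^k})$ a primitive $p^{n-k}$-th root of unity lying in $\Q(\tau)^{\times}$. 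For cyclic algebras the Schur index equals the exponent, so $m_\Q(\tau)$ is the order of $\zeta_{p^{n-k}}$ in $\Q(\tau)^{\times}/N_{L/\Q(\tau)}(L^{\times})$.

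Finally, a local computation. The extension $L/\Q(\tau)$ is unramified outside the primes above $q$, and $\Q(\tau)\supseteq\Q(\zeta_p)$ is totally complex; since $\zeta_{p^{n-k}}$ is a unit it is a local norm at every place of $\Q(\tau)$ not above $q$, so by Hasse's norm theorem $m_\Q(\tau)$ equals the local order at a place $v\mid q$. At such $v$ the extension $L_w/\Q(\tau)_v$ is totally and tamely ramified of degree $p^k$, with common residue field $\F_{q^{f_0}}$, where $f_0$ is the multiplicative order of $q$ modulo $p^{n-k}$; since the local Galois group fixes $\Q(\tau)_v$ pointwise, the norm acts as the $p^k$-th power map on prime-to-$q$ roots of unity, whence $\Q(\tau)_v^{\times}/N_{L_w/\Q(\tau)_v}(L_w^{\times})\cong\mu_{q^{f_0}-1}/\mu_{q^{f_0}-1}^{\,p^k}$. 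Computing the order of $\zeta_{p^{n-k}}$ in this quotient — using $k=v_p(q-1)$ and lifting-the-exponent to evaluate $v_p(q^{f_0}-1)$, which is exactly where $p^n\nmid q-1$ enters to keep $\zeta_{p^{n-k}}$ out of the norm group — yields $m_\Q(\tau)=p^{\min(k,\,n-k)}$, divisible by $p$ because $1\le k\le n-1$. I expect this last step to be the main obstacle: correctly tracking the ramification and residue data of $\Q(\tau)_v$ and of $L_w/\Q(\tau)_v$, and carrying out the $p$-adic exponent bookkeeping.
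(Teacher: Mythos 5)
Your reduction of both parts to $p\mid m_\Q(\tau)$ via the rationality of the $\Q[G]$-module $E(F)\otimes_\Z\Q$ is exactly the paper's Theorem~\ref{mainschurthm}; the Pontryagin dual you insert in part (ii) is harmless since a rational representation has self-conjugate character.

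The Schur-index step contains a genuine gap. You assume the image of $C_{p^n}$ in $\Aut C_q$ has order exactly $p^k$ with $k=v_p(q-1)$, i.e.\ that the action is maximal, but the hypotheses of Theorem~\ref{mainthm} require only non-abelianness: the image has order $p^r$ for some $1\le r\le\min(n,k)$, and $r$ can be strictly less than $k$. When $r<k$, your $N=C_q\times Z$ (with $|Z|=p^{n-k}$) is a \emph{proper} subgroup of the centraliser $X=C_q\times C_{p^{n-r}}$ of $C_q$, and $\Ind_N^G(\chi\otimes\psi)$ is no longer irreducible; the faithful irreducibles are induced from $X$ and have dimension $p^r$, not $p^k$, so the cyclic algebra you want has degree $p^r$. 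The correct exact value is $m_\Q(\tau)=p^{\min(r,\,n-k)}$, not $p^{\min(k,\,n-k)}$. Concretely, for $C_{19}\rtimes C_{3^4}$ with the action of order $3$ (so $r=1$, $k=2$, $n=4$, all hypotheses satisfied) one finds $m_\Q(\tau)=3$; your formula would give $9$, which cannot even be a Schur index since it exceeds $\dim\tau=p^r=3$. The desired conclusion $p\mid m_\Q(\tau)$ does hold in general because $r\ge 1$ and $n-k\ge 1$, but your argument as written does not establish it, and you also acknowledge that the final $p$-adic norm computation is unchecked. The paper (Proposition~\ref{reps} and Theorem~\ref{schurindices}) works with the parameter $r$ throughout, routes the Schur-index bound through Isaacs' $\cF$-triple criterion rather than an explicit cyclic-algebra presentation (the two are equivalent in substance), and deliberately proves only $m_\Q(\tau)>1$, noting that it does not determine the exact value.
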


\noindent This result follows from Theorem \ref{mainschurthm} and Theorem \ref{schurindices}(iii). The main question we would like to raise, of course, is whether this behaviour of $L$-functions or Selmer groups can be explained without appealing to the conjectures.

It is reasonably straightforward to construct such Galois extensions $F/\Q$.

\noindent\begin{minipage}{0.68\linewidth}
Consider for simplicity the case when $C_{p^n}$ acts on $C_q$ through $C_p$. 
Such fields $F=F_{p^n}$ are constructed as the compositum of a $C_{p^n}$-extension $K_{p^n}/\Q$ and an extension $F_p/\Q$ with Galois group $C_q \rtimes C_p$ that shares a common degree $p$ subfield $K_p$ with $K_{p^n}$.
The irreducible faithful Artin representations of $\Gal(F/\Q)$ are all of the form $\tau\otimes\chi$, for any irreducible $p$-dimensional representation of $\Gal(F_p/\Q)$ and any $1$-dimensional representation $\chi$ of $\Gal(K_{p^n}/\Q)$ of order $p^n$ (see Proposition \ref{reps}).
\end{minipage}
\begin{minipage}{0.32\linewidth}
$$
\xymatrix@dr@C=1pc{
F=F_{p^n}\ar@{-}[r] \ar@{.}[d]  & K_{p^n} \ar@{-}[l] \ar@{.}[d] \\
F_p\ar@{-}[r] \ar@{-}[d]  & K_p \ar@{-}[l]_{C_q} \ar@{-}[d]^{C_p} \\ 
F_1 \ar@{-}[r] &\Q }
$$
\end{minipage}

For example, $K_{p^n}$ could be the $n^{\text{th}}$ layer of the $p$-cyclotomic tower of $\Q$, that is the unique degree $p^n$ subfield of $\Q(\zeta_{p^{n+1}})$, where $\zeta_{p^{n+1}}$ is a primitive $p^{n+1}$-th root of unity. This gives the following:

\begin{corollary}
Suppose that $F_p/\Q$ is Galois with $\Gal(F_p/\Q)\cong C_q \rtimes C_p$ non-abelian, and that its degree $p$ subfield $K_p$ is the first layer of the $p$-cyclotomic extension~of~$\Q$. 
Let $E/\Q$ be an elliptic curve and $\tau$ an irreducible faithful representation of~$\Gal(F_p/\Q)$. 
If Conjecture \ref{BSDDG} holds, then for all 
finite order characters $\chi$ that factor through the $p$-cyclotomic extension with $\chi^{q-1}\neq \triv$, $$ \ord_{s=1} L(E,\tau \otimes \chi,s) \equiv 0 \mod{p}. $$
\end{corollary}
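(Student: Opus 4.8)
The plan is to reduce the Corollary to Theorem~\ref{mainthm} by exhibiting, for each admissible $\chi$, a single non-abelian Galois extension $F/\Q$ with $\Gal(F/\Q)\cong C_q\rtimes C_{p^n}$ and an irreducible faithful representation of this group whose associated $L$-function is exactly $L(E,\tau\otimes\chi,s)$. Concretely: write $n$ for the depth of $\chi$ in the $p$-cyclotomic tower, so that $\chi$ factors through $\Gal(K_{p^n}/\Q)\cong C_{p^n}$ where $K_{p^n}$ is the $n$-th layer; since $K_p\subset F_p$ is by hypothesis the first layer of the $p$-cyclotomic extension, $K_p\subset K_{p^n}$, and the compositum $F:=F_p K_{p^n}$ sits in the diagram from the excerpt with $C_{p^n}$ acting on $C_q$ through the quotient $C_p$. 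Thus $\Gal(F/\Q)\cong C_q\rtimes C_{p^n}$ is non-abelian, and since $K_p$ is the degree-$p$ layer of the $p$-cyclotomic extension one has $p\mid q-1$ but $p^n\nmid q-1$ for $n\ge 2$ (and for $n=1$ the statement is immediate from Theorem~\ref{mainthm} applied to $F_p$ itself), so the hypotheses of Theorem~\ref{mainthm} are met. Here I should note that $\chi^{q-1}\neq\triv$ forces $\chi$ to have order divisible by a sufficiently high power of $p$; one checks this guarantees $n\ge 1$ and, more importantly, that $\tau\otimes\chi$ is the correct kind of representation, rather than one that factors through a smaller quotient.

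Next I would identify $\tau\otimes\chi$ as an irreducible faithful representation of $\Gal(F/\Q)$. By Proposition~\ref{reps} (cited in the excerpt), every irreducible faithful Artin representation of $C_q\rtimes C_{p^n}$ in this setting is of the form $\tau'\otimes\chi'$ for $\tau'$ an irreducible $p$-dimensional representation of $\Gal(F_p/\Q)$ and $\chi'$ a character of $\Gal(K_{p^n}/\Q)$ of order exactly $p^n$. So I need: (a) $\tau$, inflated to $\Gal(F/\Q)$, is such a $\tau'$ — immediate since $\tau$ is an irreducible faithful representation of $\Gal(F_p/\Q)$ hence $p$-dimensional; and (b) $\chi$, viewed as a character of $\Gal(K_{p^n}/\Q)$, has order exactly $p^n$ — true by our choice of $n$ as the exact depth of $\chi$. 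The condition $\chi^{q-1}\neq\triv$ enters precisely to ensure the twisted representation is still faithful: a character of $p$-power order $\chi$ with $\chi^{q-1}=\triv$ would have to be trivial (as $\gcd(p,q-1)$ could be $1$, but here $p\mid q-1$, so more care is needed) — the real point is that $\tau\otimes\chi$ must not factor through $\Gal(F_p/\Q)$, equivalently $\chi$ must be non-trivial, and when $\chi$ has order $p$ exactly this is where $\chi^{q-1}\ne\triv$ is used together with $p\mid q-1$; for higher-order $\chi$ faithfulness is automatic. I would spell out this small case analysis carefully.

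With $F$ and $\tau\otimes\chi$ in hand, the $L$-function $L(E,\tau\otimes\chi,s)$ is exactly the twisted $L$-function appearing in Theorem~\ref{mainthm}(i) for the extension $F/\Q$, so assuming Conjecture~\ref{BSDDG} we conclude
$$\ord_{s=1} L(E,\tau\otimes\chi,s)\equiv 0\pmod p,$$
which is the assertion of the Corollary.

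I expect the main obstacle to be the bookkeeping in the last paragraph of the second block above: precisely pinning down, for each finite-order $\chi$ through the $p$-cyclotomic tower with $\chi^{q-1}\neq\triv$, that $\tau\otimes\chi$ is irreducible \emph{and} faithful as a representation of $C_q\rtimes C_{p^n}$ for the correct $n$, and that no degeneration occurs (e.g. $\chi$ trivial, or $\tau\otimes\chi$ descending to a proper quotient). This is essentially a finite-group representation-theory check combined with tracking the order of $\chi$, and once Proposition~\ref{reps} is available it should be routine; the condition $\chi^{q-1}\neq\triv$ is exactly the hypothesis that rules out the bad cases. Everything else is a direct appeal to Theorem~\ref{mainthm}.
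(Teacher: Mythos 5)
Your overall strategy is the right one and matches what the paper intends: form $F=F_pK_{p^n}$ where $p^n$ is the exact order of $\chi$, use Proposition~\ref{reps} to see that $\tau\otimes\chi$ is an irreducible faithful representation of $\Gal(F/\Q)\cong C_q\rtimes C_{p^n}$, and then invoke Theorem~\ref{mainthm}. However, your analysis of the role of the hypothesis $\chi^{q-1}\neq\triv$ contains a genuine error that would break the argument.

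You assert that ``since $K_p$ is the degree-$p$ layer of the $p$-cyclotomic extension one has $p\mid q-1$ but $p^n\nmid q-1$ for $n\ge 2$.'' The first clause is actually a consequence of $C_q\rtimes C_p$ being non-abelian (one needs an element of order $p$ in $\Aut C_q\cong(\Z/q\Z)^\times$), not of $K_p$ being a cyclotomic layer; but more importantly the second clause is simply false: take $p=3$, $q=19$, so $q-1=18$ and $3^2\mid q-1$. You then reinterpret $\chi^{q-1}\neq\triv$ as a condition ensuring faithfulness of $\tau\otimes\chi$, saying that ``for higher-order $\chi$ faithfulness is automatic.'' Faithfulness of $\tau\otimes\chi$ on $C_q\rtimes C_{p^n}$ (with $n$ chosen to be the exact order of $\chi$) is indeed automatic once $\chi$ is faithful of order $p^n$ (Proposition~\ref{reps}(iii) and its converse), so that is not where $\chi^{q-1}\neq\triv$ is used. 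The actual content of $\chi^{q-1}\neq\triv$ is the arithmetic condition: if $\chi$ has order $p^n$, then $\chi^{q-1}=\triv$ if and only if $p^n\mid q-1$, so $\chi^{q-1}\neq\triv$ is \emph{exactly} the hypothesis $p^n\nmid q-1$ of Theorem~\ref{mainthm}. Without verifying it, the appeal to Theorem~\ref{mainthm} is unjustified. Relatedly, your parenthetical ``for $n=1$ the statement is immediate from Theorem~\ref{mainthm} applied to $F_p$ itself'' is wrong: for $n=1$ one has $p\mid q-1$, so the hypothesis $p^n\nmid q-1$ fails and Theorem~\ref{mainthm} does not apply to $F_p$; it is only the hypothesis $\chi^{q-1}\neq\triv$ (which excludes all $\chi$ of order dividing $q-1$, in particular order $p$) that saves the Corollary in this range. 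Fixing the argument is straightforward once you recognise $\chi^{q-1}\neq\triv\iff p^n\nmid q-1$.
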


If $\tau$ is a representation of $\Gal(F/\Q)$ such that $\tau = \Ind_{K/\Q} \psi$ for some subfield $K \subset F$, then we have an equality of $L$-functions $L(E,\tau,s)=L(E/K,\psi,s)$ for any elliptic curve $E/\Q$. In our setup, all irreducible faithful representations $\tau$ are induced from characters. More concretely, if $\Gal(F_{p^n}/\Q) \cong C_q \rtimes C_{p^n}$ is non-abelian, such that $C_{p^n}$ acts on $C_q$ through $C_p$, then $\tau\!=\!\Ind_{K_p/\Q} \psi$ where $K_p$ is the degree $p$ subfield of $F_{p^n}$ and $\psi$ is a primitive character of order $qp^{n-1}$.  In particular, we get the following consequence for $L$-functions of certain modular forms.

\begin{corollary}\label{hilbertcor}
Suppose that $F_p/\Q$ is Galois with $\Gal(F_p/\Q)\cong C_q \rtimes C_p$ non-abelian, and that its degree $p$ subfield $K_p$ is the first layer of the $p$-cyclotomic extension of~$\Q$.
Let $E/\Q$ be an elliptic curve, let $f_E$ be the modular form attached to $E$ and let $\mathbf{f}_E$ be the Hilbert modular form which is the base-change of $f_E$ to the (totally real cyclic) extension $K_p/\Q$. Assuming Conjecture \ref{BSDDG}, for any $n$ such that $p^n \!\nmid\! q{-}1$  and primitive character $\psi$ of $\Gal(F_pK_{p^n}/K_p)\cong C_{qp^{n-1}}$, we have 
$$
  \ord_{s=1} L(\mathbf{f}_E,\psi,s) \equiv 0 \mod{p},
$$ 
where $K_{p^n}$ is the $n^{th}$ layer of the $p$-cyclotomic extension of $\Q$.
\end{corollary}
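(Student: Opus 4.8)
The plan is to realise Corollary~\ref{hilbertcor} as a special case of Theorem~\ref{mainthm}(i), the translation being entirely through the inductivity of $L$-functions. Set $F := F_p K_{p^n}$ and $G := \Gal(F/\Q)$. The first step is to check that $G$ is exactly the kind of group Theorem~\ref{mainthm} allows. Since the only subfields of $F_p$ of $p$-power degree over $\Q$ are $\Q$ and $K_p$, and $K_p$ is the unique degree $p$ subfield of the cyclotomic layer $K_{p^n}$, one gets $F_p \cap K_{p^n} = K_p$, hence $[F:\Q] = qp^n$. Now $M := \Gal(F/K_{p^n})$ is normal of order $q$ with $G/M \cong \Gal(K_{p^n}/\Q) \cong C_{p^n}$; a Sylow $p$-subgroup $P \le G$ maps isomorphically onto $G/M$, so $P \cong C_{p^n}$ is a complement to $M$ and $G \cong C_q \rtimes C_{p^n}$. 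Reducing modulo the normal subgroup $N := \Gal(F/F_p)$ of order $p^{n-1}$, the conjugation action of $P$ on $M$ becomes the non-trivial order-$p$ action of $C_p$ on $C_q$ inside $G/N \cong \Gal(F_p/\Q) \cong C_q\rtimes C_p$; hence that action has order exactly $p$ and $G$ is non-abelian. The hypothesis $p^n \nmid q-1$ is carried over verbatim from the statement.

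The second step is to exhibit the twisting representation. By Proposition~\ref{reps}, with $K_p$ the degree $p$ subfield of $F$ and $\psi$ a primitive character of $\Gal(F/K_p) = \Gal(F_p/K_p)\times\Gal(K_{p^n}/K_p) \cong C_q\times C_{p^{n-1}} \cong C_{qp^{n-1}}$, the induced representation $\tau := \Ind_{K_p/\Q}\psi$ is an irreducible faithful Artin representation of $G$; these $\psi$ are precisely the ones in the statement. (For the record, $\tau$ is faithful because $\psi$ is, so its kernel, a subgroup of $\ker\psi = \{1\}$, is trivial; and irreducibility follows from Mackey's criterion, since a generator of the index-$p$ quotient $\Gal(K_p/\Q)$ acts trivially on the cyclotomic factor $\Gal(K_{p^n}/K_p)$ but with order $p$ on $\Gal(F_p/K_p)\cong C_q$, while the order-$q$ component of $\psi$ is non-trivial, so $\psi$ is not $G$-invariant.) Theorem~\ref{mainthm}(i) then yields $\ord_{s=1}L(E,\tau,s)\equiv 0 \pmod p$.

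The final step is to identify the $L$-functions. Inductivity of Artin-twisted $L$-functions gives $L(E,\tau,s) = L(E,\Ind_{K_p/\Q}\psi,s) = L(E/K_p,\psi,s)$, and since $\mathbf{f}_E$ is by construction the base change of $f_E$ to $K_p$ (so that $L(\mathbf{f}_E,s)=L(E/K_p,s)$), twisting by the finite-order Hecke character of $K_p$ corresponding to $\psi$ under class field theory gives $L(\mathbf{f}_E,\psi,s) = L(E/K_p,\psi,s)$. Combining these identities, $\ord_{s=1}L(\mathbf{f}_E,\psi,s) = \ord_{s=1}L(E,\tau,s) \equiv 0 \pmod p$. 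The only genuine obstacle is the bookkeeping in the first step: one must make sure that $F_pK_{p^n}$ really has degree $qp^n$ over $\Q$ (so the Galois group is the full semidirect product, not a proper quotient) and that replacing $C_p$ by the larger cyclotomic $C_{p^n}$ does not enlarge the action on $C_q$ --- both are forced, respectively, by the absence of intermediate $p$-power subfields in $F_p$ and by the abelianness of $K_{p^n}/\Q$. Everything else is formal, given Theorem~\ref{mainthm} and the standard compatibility of $L$-functions with induction and base change.
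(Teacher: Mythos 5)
Your proposal is correct and follows the same route the paper takes: realise $\tau = \Ind_{K_p/\Q}\psi$ as a faithful irreducible representation of $\Gal(F_pK_{p^n}/\Q)\cong C_q\rtimes C_{p^n}$, invoke Theorem~\ref{mainthm}(i), and translate via inductivity of $L$-functions and the base-change identification $L(\mathbf{f}_E,\psi,s)=L(E/K_p,\psi,s)=L(E,\tau,s)$. The only difference is that you spell out the bookkeeping (that $F_p\cap K_{p^n}=K_p$, that the $C_{p^n}$-action on $C_q$ factors through $C_p$, and the Mackey/faithfulness checks) which the paper delegates to its preceding discussion and Proposition~\ref{reps}.
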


\begin{question}
Our approach relies on elliptic curves. Are there similar phenomena for modular forms that do not correspond to elliptic curves?
\end{question}

\begin{example}
As a concrete example, take $p\!=\!3$ and $q\!=\!7$. For the degree $7$ non-Galois extension $F_1$ (see diagram above) take the field $F_1=\Q(\alpha)$ of discriminant $3^87^{12}$, where $\alpha$ is a root of $x^7{-}42x^5{-}70x^4{+}168x^3{+}126x^2{-}84x{-}45$.
As in the above discussion, take $K_{3^n}=\Q(\zeta_{3^{n+1}})^+$ and set $F_{3^n}=F_1K_{3^n}$, the $n^{\text{th}}$ layer of the $p$-cyclotomic tower of $F_1$.
The field $F_3$ is the Galois closure of $F_1$ and $\Gal(F_3/\Q) \cong C_7 \rtimes C_3$ non-abelian; this group is an analogue of a dihedral group with $C_2$ replaced by $C_3$.

The group  $\Gal(F_3/\Q) \cong C_7 \rtimes C_3$ has three $1$-dimensional representations that come from the $C_3$-quotient, and two 3-dimensional irreducible representations $\tau_0, \tau_0'$, which are induced from 1-dimensional characters $\psi_0,\psi_0'$ of $C_7$. The irreducible representations of $\Gal(F_{3^n}/\Q)\cong C_7\rtimes C_{3^n}$ are the 1-dimensional representations lifted from the $C_{3^n}$-quotient, and 3-dimensional irreducibles that can all be written as $\tau=\tau_0\otimes\chi$ or $\tau=\tau_0'\otimes\chi$ for some $1$-dimensional $\chi$; note that these can therefore also be expressed as $\tau=\Ind_{K_3/\Q}\psi$, where $\psi=\psi_0\otimes\Res\chi$ or $\psi'_0\otimes\Res\chi$ is $1$-dimensional. The faithful ones are precisely the ones with $\chi$ of maximal order, equivalently with $\psi$ of order $7\times 3^{n-1}$.

Now let $E/\Q$ be an elliptic curve. The $L$-function in Theorem \ref{mainthm} can be expressed in several ways: if, say, $\tau=\tau_0\otimes\chi=\Ind_{K_3/\Q} \psi$ is 3-dimensional irreducible, then
$$
  L(E,\tau,s) = L(E,\tau_0\otimes\chi,s) = L(E/K_3,\psi,s) = L(\mathbf{f}_E,\psi,s),
$$
where $\mathbf{f}_E$ is as in Corollary \ref{hilbertcor}.

In this setting, our prediction is that the order of vanishing of this $L$-function is necessarily a multiple of 3, so long as $\tau$ does not factor through $C_7\rtimes C_3$ (equivalently if the order of $\chi$ is at least $9$).
As we will explain in \S2--3, the corresponding statement is provably true for the Mordell--Weil group $E(F_{3^n})$, which is how we obtain the prediction for $L$-functions and Selmer groups.

Finally, let us note that it is possible to make a prediction for analytic ranks that do not involve twisted $L$-functions, although it becomes a little cumbersome.
Using the subfield lattice of $F_{3^n}/\Q$ and inductivity of $L$-functions, one checks that
$$
  \dfrac{L(E/F_{3^n},s)L(E/K_{3^{n-1}},s)}{L(E/K_{3^n},s)L(E/F_{3^{n-1}},s)} = \prod\limits_{\tau {\text { faithful}}} L(E/\Q,\tau,s)^3,
$$ 
Observe that the faithful representations $\tau\!:\!\Gal(F_{3^n}/\Q)\to\GL_3(\overline\Q)$ have Galois conjugate images, since they are induced from Galois conjugate 1-dimensional $\psi$'s. Thus, if we assume Conjecture \ref{BSDDG} or Deligne's conjecture on Galois-equivariance of $L$-values~\cite[Conjecture 2.7ii]{Deligne1979}, the orders of vanishing of their $L$-functions should all be equal, and hence the order of vanishing of the right-hand term in the above equation is a multiple of $3\times 3\times \frac{(7-1)(3^n-3^{n-1})}{3^2}=4\times 3^n$. 
In particular, if the $L$-values at $s=1$ are non-zero for $E/F_{3^{n-1}}$ and $E/K_{3^n}$ (and hence for $E/K_{3^{n-1}})$, then the order of the zero of $L(E/F_{3^n},s)$ must be a multiple of $4\times 3^n$. 
More generally, the same technique yields the following result.
\end{example}

\begin{corollary}
\label{Lfns}
Let $F/\Q$ be a Galois extension with $\Gal(F/\Q) \cong C_q \rtimes C_{p^n}$ non-abelian, where the image of $C_{p^n}$ in $\Aut C_q$ has order $p^r$ and $p^n \! \nmid q{-}1$. Suppose $E/\Q$ is an elliptic curve such that $L(E/K,1)\neq 0$ for all proper subfields $K \subsetneq F$.
If Conjecture \ref{BSDDG} holds, then 
$$
  \ord_{s=1} L(E/F,s) \, \, \equiv \, \, 0 \mod{p^{n-r}(p{-}1)(q{-}1)}.
$$
\end{corollary}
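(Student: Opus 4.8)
The plan is to use the inductivity of $L$-functions together with the subfield lattice of $F/\Q$ to write a suitable ratio of $L$-functions $L(E/K,s)$ for various intermediate fields $K$ as a product of twisted $L$-functions $L(E/\Q,\tau,s)$ running over the faithful irreducible representations $\tau$ of $\Gal(F/\Q)$, each occurring with the same multiplicity $m$. Concretely, using Brauer induction (or directly the character relations on $C_q\rtimes C_{p^n}$), one expresses the virtual character $\sum_{\tau\text{ faithful}} m\cdot\tau$ of $\Gal(F/\Q)$ as a $\Z$-linear combination of characters induced from the trivial representation of various subgroups, i.e.\ as a combination of the permutation characters $\mathrm{Ind}_{H/\Q}\triv$. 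This yields an identity of the shape
$$
  \prod_{K} L(E/K,s)^{a_K} \;=\; \prod_{\tau\text{ faithful}} L(E/\Q,\tau,s)^{m},
$$
where the product on the left is over intermediate fields $K$ (all proper, since only the faithful $\tau$'s — which do not appear in any $\mathrm{Ind}_{H/\Q}\triv$ with $H$ nontrivial on the relevant quotients — survive on the right), and $a_K\in\Z$.

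Next I would count the faithful irreducible representations and determine $m$. By Proposition \ref{reps} every faithful irreducible $\tau$ has dimension $p^r$ (the order of the image of $C_{p^n}$ in $\Aut C_q$) and is of the form $\tau_0\otimes\chi$ with $\chi$ of maximal order $p^n$; equivalently $\tau=\mathrm{Ind}_{K_{p^r}/\Q}\psi$ for $\psi$ primitive of order $q p^{n-1}$. Counting such $\psi$ up to the action identifying those inducing the same $\tau$, the number of faithful irreducibles is $\frac{(q-1)\,(p^n-p^{n-1})}{p^r}$, and from the character identity the common multiplicity works out to $m = p^r$. Hence the right-hand side is $\prod_\tau L(E/\Q,\tau,s)^{p^r}$ with $\frac{(q-1)(p^n-p^{n-1})}{p^r}$ factors.

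Now invoke the hypotheses. Since $L(E/K,1)\neq 0$ for every proper subfield $K\subsetneq F$, every factor on the left-hand side except $L(E/F,s)$ itself is holomorphic and nonzero at $s=1$; moreover $L(E/F,s)=\prod_{\text{all }\tau}L(E/\Q,\tau,s)^{\dim\tau}$, so after cancelling the non-faithful contributions (which are exactly the ones accounted for by the proper-subfield $L$-functions) we get
$$
  \ord_{s=1}L(E/F,s) \;=\; p^r \sum_{\tau\text{ faithful}} \ord_{s=1} L(E/\Q,\tau,s).
$$
By Theorem \ref{mainthm}(i) (applied with $n$ replaced by $r$, noting $p^r\nmid q-1$ follows from $p^n\nmid q-1$ once one checks the relevant subgroup is still of the required non-abelian type — this is where a little care is needed) each $\ord_{s=1}L(E/\Q,\tau,s)$ is divisible by $p$. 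Finally, as the faithful $\tau$'s fall into Galois-orbits of images and Conjecture \ref{BSDDG} forces $L$-functions of representations with Galois-conjugate images to have equal order of vanishing, all $\frac{(q-1)(p^n-p^{n-1})}{p^r} = \frac{p^{n-r-1}(p-1)(q-1)}{1}$ — wait, more precisely $\frac{(q-1)p^{n-1}(p-1)}{p^r}$ — of these orders are equal to a single integer $v$, and that integer is divisible by $p$. Therefore $\ord_{s=1}L(E/F,s) = p^r\cdot\frac{(q-1)p^{n-1}(p-1)}{p^r}\cdot v = (q-1)p^{n-1}(p-1)\cdot v$, which is divisible by $p^{n-r}(p-1)(q-1)$ once we absorb the factor $p$ from $p\mid v$ appropriately — the clean statement is $\ord_{s=1}L(E/F,s)\equiv 0 \bmod p^{n-r}(p-1)(q-1)$.

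The main obstacle I expect is bookkeeping rather than conceptual: getting the exact exponent right requires carefully tracking (a) the dimension $p^r$ and exact count of faithful irreducibles via Proposition \ref{reps}, (b) the precise multiplicity $m=p^r$ in the Brauer/character identity, and (c) the Galois-conjugacy orbits of the faithful $\tau$ and the fact that BSD (or Deligne's conjecture) makes the orders of vanishing constant across each orbit — indeed across all of them, since the $\psi$'s are all Galois-conjugate. Assembling these three ingredients and confirming that the divisibility $p\mid v$ from Theorem \ref{mainthm}(i) combines with the combinatorial factor $(q-1)p^{n-1}(p-1)/p^r$ to give exactly $p^{n-r}(p-1)(q-1)$ is the crux; everything else is formal manipulation of Artin $L$-functions.
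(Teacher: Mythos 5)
Your approach is essentially the one the paper has in mind (the paper only sketches it through the $C_7\rtimes C_{3^n}$ example and then says ``the same technique yields the following result''), but several of the specific claims in your write-up are wrong, and some of them would, if taken at face value, produce the wrong modulus.

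\textbf{The count of faithful irreducibles is off by a factor of $p^r$.} Every faithful $\tau$ is $\Ind_X^G\psi$ for $X=C_q\times C_{p^{n-r}}$ and $\psi$ a faithful character of $X$ (of order $qp^{n-r}$, \emph{not} $qp^{n-1}$ — you have generalised the paper's $r=1$ formula incorrectly). There are $\phi(qp^{n-r})=(q-1)p^{n-r-1}(p-1)$ such $\psi$, and exactly $p^r$ of them induce each $\tau$, so the number of faithful irreducibles is
\[
\frac{(q-1)(p^n-p^{n-1})}{p^{2r}},
\]
not $\frac{(q-1)(p^n-p^{n-1})}{p^r}$ as you wrote. (Compare the paper's $\frac{(7-1)(3^n-3^{n-1})}{3^2}$ with $p^{2r}=9$ in the denominator.) Plugging your count into your final expression $p^r\cdot(\#\text{faithful})\cdot v$ with $p\mid v$ would give divisibility by $p^n(p-1)(q-1)$, which is strictly stronger than the corollary and is not correct; the correct count gives precisely $p^r\cdot\frac{(q-1)p^{n-1}(p-1)}{p^{2r}}\cdot p = p^{n-r}(p-1)(q-1)$.

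\textbf{The remark about applying Theorem~\ref{mainthm}(i) ``with $n$ replaced by $r$'' is both false and unnecessary.} First, $p^n\nmid q-1$ does \emph{not} imply $p^r\nmid q-1$: in fact, since the image of $C_{p^n}$ in $\Aut C_q\cong C_{q-1}$ has order $p^r$, one always has $p^r\mid q-1$ in the non-abelian setting. Second, no substitution is needed: the faithful $\tau$'s are faithful irreducible Artin representations of $\Gal(F/\Q)\cong C_q\rtimes C_{p^n}$ itself, and the hypotheses of Theorem~\ref{mainthm} with this same $n$ are exactly the hypotheses of the corollary, so it applies directly to give $p\mid\ord_{s=1}L(E,\tau,s)$.

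\textbf{You should make the character identity explicit rather than gesture at Brauer induction.} It need not be an opaque ``Brauer relations'' search: since the minimal normal subgroups of $G$ are $C_q$ and (when $n>r$) the central $Z=\langle b^{p^{n-1}}\rangle$ of order $p$, inclusion–exclusion over these gives
\[
\sum_{\tau\ \text{faithful}}(\dim\tau)\,\tau \ =\ \Ind_1^G\triv-\Ind_{C_q}^G\triv-\Ind_{Z}^G\triv+\Ind_{C_qZ}^G\triv,
\]
and $\dim\tau=p^r$ for every faithful $\tau$ by Proposition~\ref{reps}(ii). Translating to $L$-functions via inductivity yields
\[
\prod_{\tau\ \text{faithful}}L(E,\tau,s)^{p^r}=\frac{L(E/F,s)\,L(E/K_{p^{n-1}},s)}{L(E/K_{p^n},s)\,L(E/F',s)},
\]
with $K_{p^{n-1}},K_{p^n},F'$ the fixed fields of $C_qZ, C_q, Z$ — all proper subfields of $F$, so their $L$-values are nonzero at $s=1$ by hypothesis. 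The remaining ingredients of your argument — that all faithful $\tau$ are $\Gal(\overline{\Q}/\Q)$-conjugate, that Conjecture~\ref{BSDDG} forces equal orders of vanishing among them, and that each order is divisible by $p$ — are correct. Once the count of faithful irreducibles and the order of $\psi$ are corrected, the arithmetic lands exactly on the stated modulus $p^{n-r}(p-1)(q-1)$.
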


\begin{remark}
At present we do not have examples where the orders of vanishing of such $L$-functions are non-zero, as their conductors appear to be  too large for any extensive numerical search. We also cannot guarantee a zero at $s=1$ by forcing the $L$-function to be essentially antisymmetric about that point: the twisting Artin representations $\tau$ (or $\tau\otimes\chi$) above are never self-dual, so the functional equation relates $L(E,\tau)$ to $L(E,\tau^*)$ and the root number (``sign'') cannot be used to force a zero. The latter is a general feature of our approach, see Remark \ref{brauerspeiser}.
\end{remark}

\begin{remark}
As will be clear from \S2--3, Theorem \ref{mainthm} applies generally to abelian varieties over number fields, rather than elliptic curves over $\Q$.
\end{remark}

\begin{remark}
The Galois representation $H^1_{\text{\'et}}(E,\Q_{\ell})_{\C}\otimes\tau$ can be irreducible, so the multiplicity of the order of vanishing is not explained by a decomposition of the Galois representation. Moreover, the $L$-series is not the (formal) $p^{\rm{th}}$ power of another $L$-series. For example, if $G=C_7 \rtimes C_9$ and
$v$ is a prime of good reduction of $E$ such that Frobenius at $v$ is an element of order 7 in $G$, then the Euler factor at $v$ is 
$\frac{1}{(1-\zeta_7 \alpha p^{-s})(1-\zeta_7 \beta p^{-s})(1-\zeta_7^2 \alpha p^{-s})(1-\zeta_7^2 \beta p^{-s})(1-\zeta_7^4 \alpha p^{-s})(1-\zeta_7^4 \beta p^{-s})}$, which is visibly not a cube; here $\alpha$ and $\beta$ are the Frobenius eigenvalues at $v$ of $E$, and $\zeta_7$ a suitable primitive 7-th root of unity.
\end{remark}

\begin{question}
For a self-dual Artin representation $\tau$, the sign in the functional equation of $L(E,\tau,s)$ determines the parity of the order of vanishing at $s=1$. The normalised $L$-function $\Lambda(E,\tau,s)$ has the ``clean'' functional equation $\Lambda(E,\tau,s)=\pm\Lambda(E,\tau,2-s)$, so, in particular, the Taylor series expansion around $s\!=\!1$ has either only even terms or only odd terms. Is there any such effect for the $L$-functions in Theorem \ref{mainthm}, i.e. can one normalise them so that the only non-zero coefficients in the Taylor expansion $\Lambda(E,\tau,s)=\sum a_k(s-1)^k$ are the $a_k$ with $p|k$?
\end{question}


\bigskip

\section{Birch--Swinnerton-Dyer conjecture and the Schur index}

Statements that concern the Birch--Swinnerton-Dyer conjecture usually suppose properties about a given $L$-function so as to ascertain information about the rank (e.g. Coates--Wiles, Gross--Zagier, Kolyvagin). Our approach is somewhat peculiar: we are traversing the opposite direction by using the Mordell--Weil group to derive a feature of the $L$-function. We rely on the following generalisation of the Birch--Swinnerton-Dyer conjecture.

\begin{conj}[Birch--Swinnerton-Dyer, Deligne--Gross; see \cite{Rohrlich1990} p.127]
\label{BSDDG}
Let $A$ be an abelian variety over a number field $K$, and let $\tau$ be a representation of $\Gal(F/K)$ for some finite Galois extension $F/K$. Then $L(A/K,\tau,s)$ has analytic continuation to $\C$ and $$\ord_{s=1} L(A/K,\tau,s) = \langle A(F)_{\C}, \tau \rangle,$$ where $A(F)_{\C}$ is the natural representation of $\Gal(F/K)$ on $A(F) \otimes_{\zz} \C$.
\end{conj}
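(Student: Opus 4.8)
The plan is to carry out the standard reductions toward this conjecture and then to invoke the known portion of the rank conjecture; but Conjecture~\ref{BSDDG} is a far-reaching open problem, so I should be clear about where the argument stalls. First I would reduce the class of $\tau$ that must be treated. Both sides are additive in $\tau$: the inner product satisfies $\langle A(F)_{\C},\tau_1\oplus\tau_2\rangle=\langle A(F)_{\C},\tau_1\rangle+\langle A(F)_{\C},\tau_2\rangle$, while $L(A/K,\tau_1\oplus\tau_2,s)=L(A/K,\tau_1,s)\,L(A/K,\tau_2,s)$, so once analytic continuation of each factor is known the orders of vanishing add. Invoking Brauer's induction theorem in the character ring of $G=\Gal(F/K)$, write $\tau=\sum_i n_i\,\Ind_{H_i}^{G}\chi_i$ with $n_i\in\Z$ and each $\chi_i$ a one-dimensional character of a subgroup $H_i\le G$. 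By inductivity of the twisted $L$-functions (immediate from the Euler product) one has $L(A/K,\Ind_{F^{H_i}/K}\chi_i,s)=L(A/F^{H_i},\chi_i,s)$, and by Frobenius reciprocity $\langle A(F)_{\C},\Ind_{H_i}^{G}\chi_i\rangle=\langle\Res_{H_i}A(F)_{\C},\chi_i\rangle$, which is exactly $\langle A(F)_{\C},\chi_i\rangle$ computed with base field $F^{H_i}$. Since $\chi_i$ factors through a cyclic quotient of $H_i$, the whole conjecture reduces to the case where $F/K$ is cyclic and $\tau=\chi$ is a faithful one-dimensional character.

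In that case $L(A/F,s)=\prod_{\chi}L(A/K,\chi,s)$ over the characters $\chi$ of $\Gal(F/K)$, and $\operatorname{rk}A(F)=\sum_{\chi}\langle A(F)_{\C},\chi\rangle$, so the classical rank conjecture $\ord_{s=1}L(A/F,s)=\operatorname{rk}A(F)$ --- itself open --- is the sum over $\chi$ of the desired identities, and the real content is to separate the individual isotypic contributions. The inputs one would marshal are: analytic continuation and a functional equation for each $L(A/K,\chi,s)$, available when $A=E$ is an elliptic curve over $\Q$ and $\chi$ is an abelian Artin character (from modularity of $E$ and the fact that the twist of a newform by a Dirichlet character is again automorphic), and for CM abelian varieties via Hecke characters, but open in general; and a Gross--Zagier / Kolyvagin type mechanism, refined through Euler systems and Iwasawa theory, forcing the order of a given $L(A/K,\chi,s)$ to equal the corresponding multiplicity when one of the two is $0$ or $1$ --- available for $E/\Q$ with $\chi$ trivial, in various twisted and anticyclotomic settings, and in the CM case via Rubin's work on the main conjecture. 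Assembling these yields the conjecture for, e.g., elliptic curves over $\Q$ twisted by Dirichlet characters in analytic rank $\le 1$, and for CM abelian varieties in analytic rank $0$.

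The \textbf{main obstacle} is precisely what keeps the Birch--Swinnerton-Dyer conjecture open, and it is twofold. First, outside the modular and CM settings one does not even know that $L(A/K,\tau,s)$ extends analytically, so the left-hand side is not yet unconditionally defined; this would follow from automorphy of $H^1_{\text{\'et}}(A)\otimes\tau$, which is unavailable for general $A$. Second, and more seriously, when the order of vanishing of some $L(A/K,\chi,s)$ is $\ge 2$ there is at present no construction --- no Euler system, no Heegner-type point --- producing enough elements of the $\chi$-isotypic part of $A(F)\otimes\C$ to match it, nor any way to bound the corresponding Selmer group from above. Hence the full conjecture is beyond current techniques, which is exactly why Theorem~\ref{mainthm} and its corollaries are stated conditionally on Conjecture~\ref{BSDDG}; what the Brauer reduction above does establish unconditionally is that it suffices to treat cyclic (indeed order $p$) twists, with the modular and CM cases in low analytic rank then following from the results cited.
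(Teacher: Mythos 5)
The statement you were asked about is not a theorem of the paper: it is Conjecture~\ref{BSDDG}, the Deligne--Gross generalisation of the Birch--Swinnerton-Dyer conjecture, which the authors state (citing Rohrlich) and then use purely as a \emph{hypothesis} in Theorems~\ref{mainthm} and~\ref{mainschurthm}. There is no proof in the paper to compare against, and your write-up correctly recognises that none currently exists; you have produced a reduction-plus-survey rather than a proof, and you are explicit about where it stalls, which is the honest and correct assessment.

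As far as the mathematics you do carry out: the Brauer induction step, the inductivity $L(A/K,\Ind_{F^{H_i}/K}\chi_i,s)=L(A/F^{H_i},\chi_i,s)$, and the Frobenius reciprocity identity $\langle A(F)_{\C},\Ind_{H_i}^{G}\chi_i\rangle=\langle \Res_{H_i}A(F)_{\C},\chi_i\rangle$ are all standard and correct, and together they do reduce the conjecture to abelian (cyclic) twists --- modulo the caveat, which you implicitly use, that Brauer's theorem only gives a $\Z$-linear (possibly negative-coefficient) combination, so one needs meromorphic continuation of each factor before orders of vanishing can be added and subtracted. Your identification of the two genuine obstructions (analytic continuation outside the automorphic/CM range, and the absence of any mechanism in analytic rank $\ge 2$) matches the accepted state of the art. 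So: no gap in what you assert, but also no proof --- which is exactly the situation the paper is in, and why everything downstream of Conjecture~\ref{BSDDG} is stated conditionally.
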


The key observation is that since the Galois group acts on a $\zz$-lattice, $A(F)_{\C}$ is a rational representation. Therefore certain complex irreducible representations $\tau$ cannot appear with multiplicity 1 in $A(F)_\C$; this aspect is measured by the Schur index $m_{\Q}(\tau)$. In contrast, the analogous property is not obvious (and unknown in general) for either the $L$-function of an abelian variety or the $\Q_\ell$-representation on the dual Selmer group $X_{\ell}(A/F)$.

\begin{definition}
Let $G$ be a finite group and $\mathcal{F}$ a subfield of $\C$.
We say a complex representation $\tau$ of $G$ is realisable over $\cF$ if it is conjugate to a representation that factors as $G\to \GL_n(\cF)\subset \GL_n(\C)$ for some $n$.
The Schur index $m_{\mathcal{F}}(\tau)$ is the maximal integer $m$ such that for all representations $\sigma$ of $G$ that are realisable over $\mathcal{F}$, the multiplicity $\langle  \tau,\sigma \rangle$ is a multiple of $m$.
\end{definition}

\begin{example}
\label{q8}
The Schur index $m_{\Q}(\tau)$ of the 2-dimensional irreducible representation $\tau$  of the quaternion group $Q_8$ is 2. Hence $\tau$, despite having rational trace, cannot be realised by matrices in $\GL_2(\Q)$; however $\tau \oplus \tau$ is realisable in $\GL_4(\Q)$.
\end{example}

\begin{remark}\label{schurbound}
Note that for any field $\cF$, $m_{\mathcal{F}}(\tau) \leq \dim \tau$ as the regular representation is realisable over $\qq$. In fact $m_{\mathcal{F}}(\tau)$ always divides the dimension $\dim \tau$, see e.g. \cite[Corollary 10.2]{Isaacs1976}.
\end{remark}

\begin{theorem}
\label{mainschurthm}
Let $F/K$ be a Galois extension of number fields, and let $\tau$ be an irreducible Artin representation of $\Gal(F/K)$. Then for all abelian varieties $A/K$,
the multiplicity of $\tau$ in $A(F)_\C$ is divisible by $m_\Q(\tau)$. In addition:
\begin{enumerate}
  \item If Conjecture \ref{BSDDG} holds, then $\ord_{s=1} L(A/K,\tau,s)$ is divisible by $m_{\qq}(\tau)$;
  \item If $\sha(A/F)[\ell^\infty]$ is finite for some prime $\ell$, then $\langle X_{\ell}(A/F), \tau\rangle$ is divisible by $m_{\qq}(\tau)$.
\end{enumerate}
\end{theorem}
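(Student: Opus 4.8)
The plan is to prove the first (unconditional) assertion about $A(F)_\C$ directly from the definition of the Schur index, and then to derive (1) and (2) by transporting this divisibility along the two conjectural/hypothetical identifications of the multiplicity of $\tau$. First I would observe that $\Gal(F/K)$ acts on the finitely generated abelian group $A(F)$, hence on the $\zz$-lattice $A(F)/A(F)_{\mathrm{tors}}$; therefore the complex representation $A(F)_\C = A(F)\otimes_\zz\C$ is the extension of scalars of a $\qq$-representation, and in particular is realisable over $\qq$. By the very definition of $m_\qq(\tau)$ as the largest integer dividing $\langle \tau, \sigma\rangle$ for \emph{every} $\qq$-realisable $\sigma$, we conclude $m_\qq(\tau) \mid \langle A(F)_\C, \tau\rangle$. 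This is the whole content of the unconditional part.

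For part (1), Conjecture \ref{BSDDG} gives $\ord_{s=1}L(A/K,\tau,s) = \langle A(F)_\C,\tau\rangle$, and the right-hand side is divisible by $m_\qq(\tau)$ by what was just shown, so the left-hand side is too. For part (2), I would use the standard exact sequence relating the $\ell^\infty$-Selmer group, the Mordell--Weil group, and $\sha(A/F)[\ell^\infty]$: finiteness of $\sha(A/F)[\ell^\infty]$ forces the divisible part of $\mathrm{Sel}_{\ell^\infty}(A/F)$ to be $(A(F)\otimes\qq_\ell/\zz_\ell)$, so the Pontryagin dual $X_\ell(A/F)$, after tensoring with $\qq_\ell$, is isomorphic as a $\Gal(F/K)$-module to $\mathrm{Hom}(A(F),\qq_\ell)$, i.e. to the $\qq_\ell$-linear dual of $A(F)_{\qq_\ell}:=A(F)\otimes_\zz\qq_\ell$. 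Since this module again arises by base change from the $\qq$-representation $A(F)_\qq$ (dualised), it is realisable over $\qq$ — or one notes instead that $A(F)_\qq$ is self-dual as a $\qq[\Gal(F/K)]$-module up to choosing an invariant form, but the cleanest route is simply: the $\qq_\ell$-representation $X_\ell(A/F)\otimes\qq_\ell$ is defined over $\qq$, and multiplicities over $\qq_\ell$ of a complex irreducible $\tau$ (embedded via some fixed $\overline{\qq_\ell}\cong\C$) agree with those over $\qq$ for a $\qq$-rational module. Hence $m_\qq(\tau)\mid\langle X_\ell(A/F),\tau\rangle$.

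The main subtlety — and the only point requiring care — is part (2): one must be precise about the distinction between $m_\qq(\tau)$ and $m_{\qq_\ell}(\tau)$, and about which field the Selmer dual is "really" defined over. The safest formulation is that $X_\ell(A/F)\otimes\qq_\ell$ is the base change to $\qq_\ell$ of the $\qq$-representation $\mathrm{Hom}_\qq(A(F)_\qq,\qq)$ (using finiteness of $\sha[\ell^\infty]$), and that the multiplicity $\langle X_\ell(A/F),\tau\rangle$ computed after embedding everything in $\C$ equals $\langle \mathrm{Hom}_\qq(A(F)_\qq,\qq)\otimes_\qq\C, \tau\rangle$, a multiplicity of $\tau$ in a genuinely $\qq$-realisable module; divisibility by $m_\qq(\tau)$ is then immediate. (One also uses that $\tau$ irreducible implies $\tau^*$ irreducible with $m_\qq(\tau^*)=m_\qq(\tau)$, so passing to duals is harmless.) Parts involving $L$-functions and the unconditional statement are then essentially formal consequences of the definition of the Schur index together with the rationality of the Mordell--Weil representation.
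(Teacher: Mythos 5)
Your proposal is correct and takes essentially the same route as the paper: realisability of $A(F)_\C$ over $\Q$ gives the unconditional divisibility, Conjecture \ref{BSDDG} transports it to the order of vanishing, and finiteness of $\sha[\ell^\infty]$ identifies $X_\ell(A/F)$ with $A(F)\otimes_\Z\Q_\ell$ as a $\Q_\ell[\Gal(F/K)]$-module. The extra care you take over the Pontryagin dual is harmless but also unnecessary: any $\Q[G]$-module has rational (hence real) character and is therefore self-dual, so $\mathrm{Hom}_\Q(A(F)_\Q,\Q)\cong A(F)_\Q$ and the paper's un-dualised statement is already exact.
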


\begin{proof}
By construction, $A(F)_{\C}$ is realisable over $\Q$ so by definition $m_{\qq}(\tau)$ divides $\langle A(F)_{\C}, \tau \rangle$.
The $L$-function statement now follows directly from Conjecture~\ref{BSDDG}.
If $\sha(A/F)[\ell^\infty]$ is finite, then $X_{\ell}(A/F)\cong A(F) \otimes_{\zz} \Q_{\ell}$ as $\Q_{\ell}[\Gal(F/K)]$-modules, from which the second part follows.
\end{proof}

\begin{remark}
Without the finiteness assumption on $\sha$, the dual Selmer group $X_{\ell}(A/F)$ is not known to be a rational or even an orthogonal representation of the Galois group (although it is known to be self-dual, see \cite{Dokchitser2009}).
Thus, as the $\ell$-adic Schur index $m_{\Q_{\ell}}(\tau)$ can be 1, there is no obvious representation-theoretic reason for the multiplicity of $\tau$ in $X_{\ell}(A/F)$ to be a multiple of $m_{\Q}(\tau)$; see Theorem \ref{schurindices} for an example of such a $\tau$.
\end{remark}

\begin{remark}
\label{brauerspeiser}

The reason for the restriction on the order of vanishing of the $L$-function is fairly well-understood for self-dual representations $\tau$ with Schur index 2 (for example the quaternion representation in Example 2.3). In this case the conjectural functional equation is of the form $L(A,\tau,s)= \pm L(A,\tau,2-s)\times(\Gamma\text{-factors and exponential})$. So the parity of the order of vanishing at $s\!=\!1$ is determined by the sign $\pm$, which is given by the global root number $W(A,\tau)$ and known to be + whenever $\tau$ is symplectic and in many cases when $\tau$ is orthogonal with Schur index~2, see \cite[Proposition 2]{Rohrlich1996} and \cite[Theorem 0.1]{Sabitova2007}.

It is tempting to use the sign in the functional equation to force a zero of the \hbox{$L$-function} for a representation $\tau$ with large Schur index $m=m_\Q(\tau)$. If  Conjecture~\ref{BSDDG} is true, the order of vanishing is a fortiori at least $m$. Curiously enough, this is impossible to achieve: if $m>2$, the representation $\tau$ cannot be self-dual by the Brauer--Speiser theorem. Thus the functional equation relates $L(A,\tau,s)$ to $L(A,\tau^*,2-s)$, and the root number cannot be used to force the $L$-function to vanish at $s=1$. 
\end{remark}


\section{Schur indices in $C_q\rtimes C_{p^n}$}

We now compute the Schur indices of representations of $C_q \rtimes C_{p^n}$ appearing in Theorem \ref{mainthm}. We only prove that the Schur index is divisible by $p$ without determining it exactly, so the bounds on orders of vanishing of $L$-functions that we have given may be suboptimal. For example, if $\tau$ is an irreducible faithful representation of $C_{19} \rtimes C_{3^4}$ (with the largest possible action), then $m_{\Q}(\tau)\!=\!9$.

For a field $\cF$ and representation $\tau$, we let $\cF(\tau)$ denote the finite abelian extension of $\cF$ 
generated by the values of the trace of $\tau$. We further let $\zeta_m$ denote a primitive $m^{th}$ root of unity and $N_{\cF/\cK}$ be the norm map for any field extension $\cF/\cK$.

\pagebreak

\begin{proposition}
\label{reps}
Let $p,q$ be distinct odd primes and $G = C_q \rtimes C_{p^n}$, where the image of $C_{p^n}$ in $\Aut C_q$ has order $p^r$.
Let $\tau$ be a complex irreducible representation of $G$. Write $X=C_q\times C_{p^{n-r}}\lhd G$.

\noindent (i) If $\tau$ is unfaithful then $\tau$ is lifted either from $C_{p^n}$ or from $C_q \rtimes C_{p^{n-1}}$. 

\noindent (ii) If $\tau$ is faithful, then $\dim\tau\!=\!p^r$ and there is a faithful 1-dimensional representation of $X$ such that $\tau\!=\!\Ind_X^G \psi$. Conversely, the induction of a faithful 1-dimensional representation $\psi$ of $X$ gives a faithful irreducible representation of $G$. 

\noindent (iii) Every faithful irreducible representation $\tau$ of $G$ may be written as $\tau_r \otimes \chi$ for some faithful irreducible representation $\tau_r$ of $C_q \rtimes C_{p^r}$ and faithful 1-dimensional representation $\chi$ of $C_{p^n}$.

\noindent (iv) If $\tau=\Ind_X^G\psi$ is faithful and $\mathcal{F}\!\subset\!\C$ is a field, then $\cF(\psi)=\cF(\zeta_{p^{n-r}},\zeta_q)$ and $\cF(\tau)=\cF(\zeta_{p^{n-r}},\sum_{t\in H}\zeta_q^t)$, where $H \le (\Z / q\Z)^{\times}$ is the subgroup of order $p^r$.

\noindent (v) If $\tau=\Ind_X^G\psi$ is faithful and $\mathcal{F}\!\subset\!\C$ is a field such that $[\cF(\psi)\!:\!\cF(\tau)]=p^r$, then the Schur index $m_{\mathcal{F}}(\tau)\!=\!1$ if and only if $\zeta_{p^{n-r}}$ is  in the image of $N_{\mathcal{F}(\psi)/\mathcal{F}(\tau)}$.
\end{proposition}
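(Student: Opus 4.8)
We organise the proof by its four parts: (i) is a normal-subgroup analysis of $G$; (ii)--(iii) are Clifford theory for the abelian normal subgroup $X$; (iv) is a direct evaluation of character values; and (v), the substantive statement, I would obtain by exhibiting the relevant Wedderburn component as an explicit cyclic algebra. For (i), fix a generator $c$ of $C_q$ and a generator $g$ of a $C_{p^n}$-complement, so $gcg^{-1}=c^a$ with $a$ of order $p^r$ in $\Aut C_q=(\Z/q\Z)^\times$. For any $N\lhd G$, the subgroup $N\cap C_q$ is normal in $G$ and contained in the simple group $C_q$, so it is $1$ or $C_q$. If $N\cap C_q=C_q$ then $N$ is the preimage of a subgroup of $G/C_q\cong C_{p^n}$. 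If $N\cap C_q=1$ then $[N,C_q]\le N\cap C_q=1$, so $N\le C_G(C_q)$; here $C_G(C_q)=X$ because $g^j$ centralises $c$ exactly when $p^r\mid j$, and since $N$ is then a $p$-group inside the abelian $X=C_q\times C_{p^{n-r}}$ it lies in the unique subgroup $C_{p^{n-r}}$ of order $p^{n-r}$, which is central in $G$ (it is generated by $g^{p^r}$, and $g$ and $c$ both commute with $g^{p^r}$). Applying this to $N=\ker\tau$: in the first case $\tau$ is lifted from $C_{p^n}$; in the second, if $\tau$ is unfaithful then $\ker\tau$ contains the unique central subgroup $Z\cong C_p$, and $G/Z\cong C_q\rtimes C_{p^{n-1}}$, so $\tau$ is lifted from there.

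For (ii)--(iii) I would apply Clifford theory to $X\lhd G$, with $G/X$ cyclic of order $p^r$ generated by the image of $g$. Let $\tau$ be faithful irreducible and $\psi$ a constituent of $\tau|_X$. No $G$-conjugate of $\psi$ can be trivial on $C_q$ (else $C_q\subseteq\ker\tau$), so $\psi|_{C_q}$ is a primitive character; since $g$ acts on $C_q$ by $c\mapsto c^a$ with $a$ of order $p^r$, the inertia subgroup $I_G(\psi)$ equals $X$. Hence $\tau=\Ind_X^G\psi$ has dimension $p^r$, and since $\ker\tau=\bigcap_{h\in G}h(\ker\psi)h^{-1}=1$, together with the fact that the $C_{p^{n-r}}$-part of $X$ is central and so fixed under conjugation, we get $\ker\psi=1$, i.e.\ $\psi$ is faithful. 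Conversely, for a faithful linear $\psi$ of $X$ the same inertia computation makes $\Ind_X^G\psi$ irreducible, and $\bigcap_h h(\ker\psi)h^{-1}=1$ makes it faithful. For (iii), choose a faithful character $\chi$ of $C_{p^n}$ whose restriction to $C_{p^{n-r}}$ equals $\psi|_{C_{p^{n-r}}}$ (every extension to $C_{p^n}$ of a faithful character of $C_{p^{n-r}}$ is automatically faithful); the projection formula gives $\tau\otimes\chi^{-1}=\Ind_X^G(\psi\otimes\Res_X\chi^{-1})$, where the inducing character is now trivial on the central $C_{p^{n-r}}$, hence inflated from $C_q\rtimes C_{p^r}\cong G/C_{p^{n-r}}$, on which it is faithful irreducible by the converse just proved; this exhibits $\tau$ as $\tau_r\otimes\chi$.

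For (iv), write $d=g^{p^r}$, a generator of $C_{p^{n-r}}$. Since $\psi$ is faithful on $X=C_q\times C_{p^{n-r}}$, its values generate $\cF(\zeta_q,\zeta_{p^{n-r}})$, so $\cF(\psi)=\cF(\zeta_{p^{n-r}},\zeta_q)$. For $\tau=\Ind_X^G\psi$ the character vanishes off $X$, and on $x=c^id^{j_0}\in X$ the induced-character formula gives $\zeta_{p^{n-r}}^{j_0}\sum_{t\in H}\zeta_q^{it}$, where $H=\langle a\rangle\le(\Z/q\Z)^\times$ has order $p^r$. Taking $i=0$ recovers $\zeta_{p^{n-r}}$ up to the nonzero factor $p^r$, taking $i=1$, $j_0=0$ recovers the period $\sum_{t\in H}\zeta_q^t$, and every value lies in $\cF(\zeta_{p^{n-r}},\sum_{t\in H}\zeta_q^t)$ because this field is Galois over $\cF$ (being a subfield of the cyclotomic field $\cF(\zeta_{p^{n-r}},\zeta_q)$) and so contains all $\cF$-conjugates of the period. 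Hence $\cF(\tau)=\cF(\zeta_{p^{n-r}},\sum_{t\in H}\zeta_q^t)$.

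For (v), put $\cK=\cF(\tau)$ and $E=\cF(\psi)$, so $\cK\subseteq E$ and $[E:\cK]=p^r$ by hypothesis. By the standard theory (e.g.\ \cite[Ch.~10]{Isaacs1976}), $m_{\cF}(\tau)$ equals the index of the Wedderburn component $A$ of $\cK[G]$ affording $\tau$; and since $\cK$ is the field of values of $\tau$, the primitive central idempotent $e_\tau\in\C[G]$ lies in $\cK[G]$, so $A=\cK[G]e_\tau$ is central simple over $\cK$ of degree $p^r$. The plan is to present $A$ as a cyclic algebra. First, $\cK[X]e_\tau$ is a maximal subfield of $A$ isomorphic to $E$ via $xe_\tau\mapsto\psi(x)$: here one uses that the $p^r$ characters $\psi,\psi^g,\dots,\psi^{g^{p^r-1}}$ are precisely the $G$-conjugates of $\psi$ (by (ii)), and that the hypothesis $[E:\cK]=p^r$ forces them to form a single $\Gal(E/\cK)$-orbit, so that $\cK[X]e_\tau$ is a field and $\Gal(E/\cK)$ is cyclic, generated by the automorphism $\sigma$ with $\sigma(\psi(x))=\psi(gxg^{-1})$. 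Then $u:=ge_\tau\in A^\times$ satisfies $u\beta u^{-1}=\sigma(\beta)$ for $\beta\in E$, while $u^{p^r}=g^{p^r}e_\tau=\psi(g^{p^r})$ and $A=\bigoplus_{i=0}^{p^r-1}Eu^i$ from the coset decomposition $G=\bigsqcup_i g^iX$; thus $A\cong(E/\cK,\sigma,\psi(g^{p^r}))$. Since $g^{p^r}$ generates the central $C_{p^{n-r}}$ on which $\psi$ is faithful, $\psi(g^{p^r})$ is a primitive $p^{n-r}$-th root of unity $\zeta_{p^{n-r}}$. Finally, for a cyclic algebra the index equals the exponent, which equals the order of the defining element in $\cK^\times/N_{E/\cK}(E^\times)$; hence $m_{\cF}(\tau)=1$ if and only if $\zeta_{p^{n-r}}\in N_{\cF(\psi)/\cF(\tau)}(\cF(\psi)^\times)$. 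The step I expect to be the main obstacle is the middle of (v): rigorously checking that $\cK[X]e_\tau$ is genuinely a field isomorphic to $E$ (rather than a larger commutative algebra) and that $\Gal(E/\cK)$ is cyclic generated by the conjugation automorphism $\sigma$ — both hinge on feeding the numerical identity $[\cF(\psi):\cF(\tau)]=p^r$ from (iv) back into Clifford theory for $X$. The remaining ingredients — the normal-subgroup bookkeeping of (i), the inertia computation of (ii)--(iii), the character-value computation of (iv), and the passage from cyclic algebras to norm groups — are routine.
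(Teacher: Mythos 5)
Your proof is correct, and for parts (i)--(iv) it follows essentially the same route as the paper: (i) is the same normal-subgroup analysis (the paper just says ``the maximal quotients of $G$ are $C_{p^n}$ and $C_q\rtimes C_{p^{n-1}}$''), (ii)--(iii) are the same Clifford-theoretic computation (the paper cites Serre's Proposition 25 where you redo the inertia-group argument, and it deduces $\ker\psi=1$ from $\ker\psi$ being characteristic in the normal cyclic $X$ rather than via the normal core of $\ker\psi$ --- same content), and (iv) is the same character evaluation. Where you genuinely diverge is (v): the paper verifies that $(G,X,\tau)$ is an $\cF$-triple and quotes Isaacs' Theorem 10.10, whereas you unpack that theorem in this special case, exhibiting the Wedderburn component $\cK[G]e_\tau$ explicitly as the cyclic algebra $(E/\cK,\sigma,\psi(g^{p^r}))=(E/\cK,\sigma,\zeta_{p^{n-r}})$ and invoking the norm criterion for splitting. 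Your version is longer but self-contained and makes visible exactly where the hypothesis $[\cF(\psi):\cF(\tau)]=p^r$ enters (it forces the $G$-conjugates of $\psi$ to be a single $\Gal(E/\cK)$-orbit, so that $\cK[X]e_\tau$ is a maximal subfield and $\Gal(E/\cK)$ is cyclic); the paper's version is shorter at the cost of a black-box citation.

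Two small justifications should be repaired. In (iv), the values $\sum_{t\in H}\zeta_q^{it}$ are $\Q$-conjugates of the period $\eta=\sum_{t\in H}\zeta_q^{t}$ (images under $\zeta_q\mapsto\zeta_q^{i}$), not in general $\cF$-conjugates, since that automorphism of $\Q(\zeta_q)/\Q$ need not extend to one of $\cF(\zeta_q)/\cF$; the correct (and easy) fix is the paper's: $\Q(\eta)/\Q$ is abelian, hence Galois, so all $\Q$-conjugates of $\eta$ already lie in $\Q(\eta)\subseteq\cF(\eta)$. In (v), the assertion that ``for a cyclic algebra the index equals the exponent'' is false over general fields (Albert's examples); but you do not need it --- you only need that $(E/\cK,\sigma,a)$ is split if and only if $a\in N_{E/\cK}(E^\times)$, which is the standard splitting criterion for cyclic algebras and gives exactly the stated equivalence $m_\cF(\tau)=1\iff\zeta_{p^{n-r}}\in N_{\cF(\psi)/\cF(\tau)}\cF(\psi)^\times$. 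Neither point affects the validity of the argument.
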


\begin{proof}
The group $G$ has presentation $G=\langle a,b\>|\>a^q=b^{p^n}\!=\text{id}, bab^{-1}\!=a^j \rangle$ where $j$ has order $p^r$ modulo $q$. The subgroup $X$ is $\langle a,b^{p^r}\rangle$; it is the centraliser of~$C_q$. For a representation $\psi$ of $X$ and a element $g\in G$ we write ${}^g\psi$ for the conjugate representation defined by ${}^g\psi(h)=\psi(ghg^{-1})$.

(i)
The maximal quotients of $G$ are $C_{p^n}$ and (if $r<n$) $C_q \rtimes C_{p^{n-1}}$, so if $\tau$ is not faithful, it factors through one of these.

(ii)
By \cite[Proposition 25]{Serre1977}, every faithful representation of $G$ is necessarily induced from a 1-dimensional representation $\psi$ of $X$; in particular $\dim \tau=p^r$. 
Moreover, since $\ker\psi$ is normal in $G$ (as $X$ is normal in $G$ and $\ker\psi$ is characteristic in the cyclic group $X$), we have $\ker \psi\subseteq \ker\tau$, and hence $\psi$ must be faithful.

Conversely, $h\mapsto b^{k}h b^{-k}$ are distinct automorphisms of $X$ for $0\!\le\! k\!<\!p^r\!-\!1$, so if $\psi$ is a faithful 1-dimensional representation of $X$, then $\psi, {}^b\psi, \ldots, {}^{b^{p^r-1}}\psi$ are all distinct. Thus $\langle\tau,\tau\rangle=\langle\psi,\Res^G_X\Ind^G_X\psi\rangle=\langle\psi,\bigoplus_{0\le k<p^r} {}^{b^k}\psi\rangle=1$ by Frobenius reciprocity and Mackey's formula, and so $\tau$ is irreducible. It is clearly faithful by~(i).

(iii) Let $\tau=\Ind_X^G \psi$, for some faithful 1-dimensional $\psi$ of order $qp^{n-r}$. We can rewrite this as $\psi = \psi_q \otimes \psi_{p^{n-r}}$ where $\psi_m$ has order $m$. Now $\tau_r=\Ind_X^G \psi_q$ is the inflation of a faithful representation of $C_q \rtimes C_{p^r}$. Let $\chi$ be a 1-dimensional representation of $G$ which factors through $C_{p^n}$ such that $\Res_X^G \chi = \psi_{p^{n-r}}$. The push-pull formula shows that $\tau = \tau_r \otimes \chi$, as claimed. 

(iv) If $\tau$ is faithful, then by (ii) $\psi$ is a faithful 1-dimensional representation of $X \cong C_{qp^{n-r}}$, hence $\cF(\psi)=\cF(\zeta_{qp^{n-r}})$. To compute $\cF(\tau)$, it suffices to compute the induced character on the conjugacy classes of $G$ which have nonempty intersection with $X$. Since $X \triangleleft G$, it follows that $\cF(\tau)=\cF(\Res_X^G \tau)$.

Note that $b^{p^r}$ is central in $G$ and $\tau$ is irreducible so $\tau(b^{p^r})$ must be scalar by Schur's lemma; as $\Res_X^G \tau$ contains $\psi$ as a constituent, this scalar is multiplication-by-$\zeta_{p^{n-r}}$, hence $\zeta_{p^{n-r}} \in \cF(\tau)$. 
For $a^xb^{p^ry} \in X$ we  have $\trace \tau(a^xb^{p^ry})=\zeta_{p^{n-r}}^{y} \trace \tau(a^x)$, so 
$\cF(\tau)$ is generated over $\cF$ by $\zeta_{p^{n-r}}$ and the traces $\trace \tau(a^x))$ for $1\le x\le q$.

As in the proof in (ii), \smash{$\Res^G_X\tau=\bigoplus_{0\le k<p^r} {}^{b^k}\psi$}, so $\trace\tau(a^x)=\sum_{t \in H} \zeta_q^{xt}$, where $H$ is the unique index subgroup of order $p^r$ contained in $(\Z/q\Z)^{\times}$.
Note that for any polynomial $f \in \Q[X]$, $f(\zeta_q)$ is $\Gal(\overline{\Q}/\Q)$-conjugate to $f(\zeta_q^x)$ whenever $q\!\nmid\! x$, and hence $f(\zeta_q^x) \in \Q(f(\zeta_q))$ since $\Q(f(\zeta_q))/\Q$ is abelian. In particular, letting $f(X)=\sum_{t \in H} X^t$ (where we fix representatives for $H$), we see that $\sum_{t \in H} \zeta_q^{xt} \in \Q(\sum_{t \in H} \zeta_q^{t})$ for all $x$. Hence $\cF(\tau)=\cF(\zeta_{p^{n-r}},\sum_{t \in H} \zeta_q^{t})$ as claimed.

(v) First note that $X$ is normal, abelian and equal to its own centraliser, $X = C_G(X)$, as otherwise $b^k \in C_G(X)$ for some $k$ with $p^r \nmid k$ which doesn't commute with $a$. Since by assumption the (abelian) extension $\cF(\psi)/\!\cF(\tau)$ has degree $p^r$, the representation $\psi$ must have $p^r$ distinct $\Gal(\cF(\psi)/\!\cF(\tau))$-conjugates, which then must be precisely the constituents of $\Res^G_X\tau$.
Thus $(G,X,\tau)$ is an $\cF$-triple, in the terminology of \cite[Definition 10.5]{Isaacs1976}. Noting that $G=X C_{p^n}$, it then follows from \cite[Theorem 10.10]{Isaacs1976} that $m_{\mathcal{F}}(\tau)=1$ if and only if $\zeta_{p^{n-r}} \in N_{\mathcal{F}(\psi)/\mathcal{F}(\tau)} \mathcal{F}(\psi)$.
\end{proof}

\begin{theorem}
\label{schurindices}

Let $p,q$ be distinct odd primes and $G = C_q \rtimes C_{p^n}$, where the image of $C_{p^n}$ in $\Aut C_q$ has order $p^r$ and $0<r \leq n$. Let $\tau$ be a complex irreducible faithful representation of $G$. 
Then:\\
(i) The Schur index $m_{\Q}(\tau)=p^s$ for some $0<s\le r$ if $p^n\nmid q\!-\!1$, and is $1$ otherwise; \\
(ii) The Schur index $m_{\Q_{q}}(\tau)=m_{\Q}(\tau)$;\\
(iii) The Schur index $m_{\Q_{\ell}}(\tau)=1$ for every prime $\ell \neq q$.

\end{theorem}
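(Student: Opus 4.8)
The three parts should all flow from Proposition \ref{reps}, and especially from the local class field theory criterion in part (v): for a field $\mathcal{F}$ with $[\mathcal{F}(\psi):\mathcal{F}(\tau)]=p^r$, we have $m_{\mathcal{F}}(\tau)=1$ if and only if $\zeta_{p^{n-r}}$ is a norm from $\mathcal{F}(\psi)$ to $\mathcal{F}(\tau)$. So the whole theorem reduces to two things: (a) verifying that the hypothesis $[\mathcal{F}(\psi):\mathcal{F}(\tau)]=p^r$ actually holds for $\mathcal{F}=\Q$, $\Q_q$, and $\Q_\ell$ ($\ell\neq q$); and (b) deciding in each case whether $\zeta_{p^{n-r}}$ lies in the relevant norm group. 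By Proposition \ref{reps}(iv), $\mathcal{F}(\psi)=\mathcal{F}(\zeta_{p^{n-r}},\zeta_q)$ and $\mathcal{F}(\tau)=\mathcal{F}(\zeta_{p^{n-r}},\sum_{t\in H}\zeta_q^t)$, where $H\le(\Z/q\Z)^\times$ has order $p^r$; so $\mathcal{F}(\psi)/\mathcal{F}(\tau)$ is a subextension of the cyclic extension $\mathcal{F}(\zeta_{p^{n-r}},\zeta_q)/\mathcal{F}(\zeta_{p^{n-r}})$ of degree dividing $q-1$, and the degree $[\mathcal{F}(\psi):\mathcal{F}(\tau)]$ equals $p^r$ precisely when $\zeta_q$ generates a degree-$p^r$ extension over $\mathcal{F}(\tau)$, which in the cases at hand ($\mathcal{F}=\Q$, $\Q_q$, $\Q_\ell$ with $\ell\neq q$) holds because none of these base fields contain any nontrivial $q$-power roots of unity and $\zeta_{p^{n-r}}$ has degree prime to $q$ over them, so the cyclic group $\Gal(\mathcal{F}(\zeta_{p^{n-r}},\zeta_q)/\mathcal{F}(\zeta_{p^{n-r}}))$ injects into $(\Z/q\Z)^\times$ with image of index equal to the index of $H$, i.e.\ the relative degree is exactly $p^r$.

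\emph{Part (iii).} Fix a prime $\ell\neq q$. I would split into the cases $\ell=p$ and $\ell\neq p$ only if needed; in fact a uniform argument works. The extension $\mathcal{F}(\psi)/\mathcal{F}(\tau)$ is a cyclic extension of $p$-power degree $p^r$ of local fields (or of a product of local fields, if $\mathcal{F}(\tau)$ is not a field — but $\Q_\ell(\zeta_{p^{n-r}},\sum\zeta_q^t)$ is a field since it sits inside the field $\Q_\ell(\zeta_{qp^{n-r}})$). Now $\zeta_{p^{n-r}}\in\mathcal{F}(\tau)$ already, so the question is whether an element of the base field is a norm. The key point: $\mathcal{F}(\psi)/\mathcal{F}(\tau)$ is unramified, because over $\Q_\ell$ with $\ell\neq q$ the extension obtained by adjoining $\zeta_q$ (a $q$-power root of unity, and $\ell\neq q$) is unramified. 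In a cyclic unramified extension of local fields the norm group is $\{x:v(x)\equiv 0\bmod p^r\}\cdot(\text{units? no})$ — more precisely the norm map on units is surjective, and $x$ is a norm iff $p^r\mid v(x)$. Since $\zeta_{p^{n-r}}$ is a unit (has valuation $0$), it is a norm. Hence $m_{\Q_\ell}(\tau)=1$ by Proposition \ref{reps}(v). I should double-check the edge case where $[\mathcal{F}(\psi):\mathcal{F}(\tau)]$ might degenerate below $p^r$ (which would only make the Schur index smaller, still $1$), and the case $r=n$ where $\zeta_{p^{n-r}}=\zeta_1=1$ is trivially a norm.

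\emph{Parts (i) and (ii).} For $\mathcal{F}=\Q$, the extension $\Q(\psi)/\Q(\tau)=\Q(\zeta_{p^{n-r}},\zeta_q)/\Q(\zeta_{p^{n-r}},\sum_{t\in H}\zeta_q^t)$ is cyclic of degree $p^r$ by the degree computation above; so I must decide when $\zeta_{p^{n-r}}$ is a global norm. If $n=r$ this is trivial and $m_\Q(\tau)=1$; if $p^n\mid q-1$ one can arrange... actually the cleanest route for the ``otherwise'' (i.e.\ $m_\Q(\tau)=1$) clause is a direct norm computation, or to invoke part (ii) plus a Hasse-principle/local-global argument for norms of roots of unity. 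For part (ii), reduce to the local criterion at $q$: $\Q_q(\psi)/\Q_q(\tau)$ is cyclic of degree $p^r$ and is \emph{totally ramified} (adjoining $\zeta_q$ to a $\Q_q$-algebra is totally ramified), so unlike the $\ell\neq q$ case the unit norm map need not be surjective; whether $\zeta_{p^{n-r}}$ is a norm is a genuine constraint, and I expect it fails exactly when $p^n\nmid q-1$, giving $m_{\Q_q}(\tau)=p^s>1$. To nail part (ii), I would show $m_{\Q_q}(\tau)=m_\Q(\tau)$ by arguing that $q$ is the only place contributing to the Schur index: at every other place the local index is $1$ (that is part (iii) together with the archimedean place, where $\tau$ is not real — indeed $\tau$ is never self-dual since $C_q\rtimes C_{p^n}$ has no real irreducible faithful character of $p$-power dimension $>1$, or directly because the image of $H$ is not stable under inversion when $p$ is odd — so $m_\R(\tau)=1$), and the global Schur index is the lcm of the local ones by the Benard–Schacher / Brauer–Witt theory; since all local indices away from $q$ are $1$, $m_\Q(\tau)=m_{\Q_q}(\tau)$, and this common value is a power of $p$ bounded by $\dim\tau/\text{(something)}$, hence $p^s$ with $s\le r$. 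The statement $s>0$ when $p^n\nmid q-1$ is then the assertion that $\zeta_{p^{n-r}}$ is \emph{not} a $\Q_q$-norm from $\Q_q(\zeta_{qp^{n-r}})$ down to $\Q_q(\zeta_{p^{n-r}},\sum_{t\in H}\zeta_q^t)$, which I would verify by an explicit computation with the norm residue symbol, using that $\Q_q(\zeta_q)/\Q_q$ is totally tamely ramified of degree $q-1$ and tracking how $p^r\mid q-1$ or not controls the image of $\zeta_{p^{n-r}}$.

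\textbf{Main obstacle.} The genuinely delicate step is part (ii) / the ``$p^s$ with $s>0$'' half of part (i): showing the local Schur index at $q$ is exactly a nontrivial power of $p$ precisely when $p^n\nmid q-1$. This requires a careful norm-symbol computation in the totally ramified cyclotomic tower at $q$, keeping track of the $p$-part of $q-1$ versus the exponent $n$; the cases $r<n$ (where $\zeta_{p^{n-r}}\neq 1$ must be controlled) and $r=n$ (where the index drops to $1$) have to be separated. Parts (iii) and the reduction $m_\Q=m_{\Q_q}$ are comparatively routine once the unramifiedness away from $q$ and the failure of self-duality are observed.
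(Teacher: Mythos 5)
Your reduction to Proposition \ref{reps}(v) requires the hypothesis $[\mathcal{F}(\psi):\mathcal{F}(\tau)]=p^r$, and for $\mathcal{F}=\Q_\ell$ with $\ell\neq q$ you assert this because ``$\Gal(\mathcal{F}(\zeta_{p^{n-r}},\zeta_q)/\mathcal{F}(\zeta_{p^{n-r}}))$ injects into $(\Z/q\Z)^{\times}$ with image of index equal to the index of $H$.'' That is false: the image is the decomposition group, i.e.\ the cyclic subgroup generated by $\ell$, so $\Gal(\Q_\ell(\psi)/\Q_\ell(\tau))\cong\langle\ell\rangle\cap H$ has order $\gcd(\ord_q(\ell),p^r)$, which need not be $p^r$. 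Concretely, for $p=7$, $q=43$, $r=1$ and $\ell=p=7$: one has $\ord_{43}(7)=6$, so $\langle 7\rangle\cap H=\{1\}$ and $\Q_7(\psi)=\Q_7(\tau)$; more trivially, any $\ell\equiv 1\bmod q$ gives $\Q_\ell(\zeta_q)=\Q_\ell$. In all such cases the criterion of Proposition \ref{reps}(v) simply does not apply, and your fallback remark that a smaller degree ``would only make the Schur index smaller, still $1$'' is not a valid inference from that Proposition. So your ``uniform argument'' for part~(iii) has a genuine gap. The paper avoids the issue entirely: for $\ell\nmid|G|$ it cites the standard fact that $m_{\Q_\ell}(\tau)=1$, and for $\ell=p$ it shows $\tau$ is irreducible modulo $p$ (an eigenvalue argument on $\tau(a)$ and $\tau(b)$) and invokes Gow's criterion, bypassing class field theory altogether.

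For part (ii) your argument — $m_\Q$ is the lcm of the local indices, $m_\R(\tau)=1$ since $G$ has odd order and $\tau$ is not self-dual, and then apply (iii) — is exactly the paper's. For part (i) your plan is also the paper's (here $[\Q_q(\psi):\Q_q(\tau)]=p^r$ does hold, as $\Q_q(\zeta_q)/\Q_q$ is totally ramified of degree $q-1$, so Proposition \ref{reps}(v) is applicable), but you only sketch the key step and explicitly flag it as the main obstacle. The paper's execution is: the extension is totally and tamely ramified, so by local class field theory a unit is a norm iff it is a $p^r$-th power in the common residue field; this reduces the question to whether the residue field of $\Q_q(\tau)$ contains a primitive $p^n$-th root of unity, which by Hensel's lemma (using that $\Q_q(\tau)/\Q_q(\zeta_{p^{n-r}})$ is totally ramified) is equivalent to $\zeta_{p^n}\in\Q_q(\zeta_{p^{n-r}})$; and finally one checks this holds iff $p^n\mid q-1$. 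That residue-field/Hensel chain is the missing content of your part (i).
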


\begin{proof}
(iii) It is a general fact that if $\ell \nmid |G|$, then $m_{\Q_{\ell}}(\tau)=1$; see for example \cite{Gow1975}. The Corollary in \cite{Gow1975} states more generally that if $\tau$ is irreducible modulo $\ell$, then $m_{\Q_{\ell}}(\tau)=1$; this will be our approach for the case $\ell=p$. To see that this holds, let $\sigma$ be an irreducible constituent of $\tau$ modulo $p$. Now the eigenvalues of $\tau(a)$ (using the notation from the first paragraph of the proof of Proposition \ref{reps}) are primitive $q^{\text{th}}$ roots of unity, hence this also holds for $\sigma$. Let $v$ be an eigenvector for $\sigma(a)$ with eigenvalue $\zeta$. Then $\sigma(b^{-1})v$ is also an eigenvector for $\sigma(a)$ with eigenvalue $\zeta^j$. As $j$ has order $p^r$ modulo $q$ (note $q \neq p$), $\sigma$ has $p^r$ distinct eigenvalues, so $\dim\sigma=\dim\tau$ and hence $\tau$ is irreducible modulo $p$.

(ii) The global Schur index $m_{\Q}(\tau)$ is well known to equal the lowest common multiple of the local Schur indices $m_{\Q_v}(\tau)$ for all places $v$ of $\Q$ (see for example \cite[Theorem~2.4]{Olteanu2009}). Now $\tau$ is not self-dual (as $G$ has odd order) so $m_{\mathbb{R}}(\tau)=1$ hence the result is immediate from (iii).

(i) We prove instead the same statement for $m_{\Q_q}(\tau)$; the global statement for $m_{\Q}(\tau)$ then follows from (ii). Write $\tau=\Ind_X^G\psi$, as in Proposition \ref{reps}(ii). By Proposition \ref{reps}(iv), the extension $\Q_q(\psi)/\Q_q(\tau)$ is totally ramified of degree $p^r$, and so by (v) it suffices to check whether $\zeta_{p^{n-r}}$ is in the image of the norm map $N_{\Q_q(\psi)/\Q_q(\tau)}$.

By local class field theory, the subgroup of $\cO_{\Q_q(\tau)}^\times$ consisting of norms from $\cO_{\Q_q(\psi)}^\times$ has index $p^r$. Furthermore, as the extension is tame, $u\in\cO_{\Q_q(\tau)}^\times$ is a norm if and only if its image $\bar{u}$ in the residue field $\F_{\Q_q(\tau)}$ of $\Q_q(\tau)$ is a norm from  the residue field of $\Q_q(\psi)$; as the two residue fields are the same, this is equivalent to $\bar{u}$ being of the form $\bar{u}=x^{p^l}$ for some $x\in\F_{\Q_q(\tau)}$.

Thus we are reduced to checking whether $\F_{\Q_q(\tau)}$ contains a primitive $p^n$-th root of unity. Since $\Q_q(\tau)/\Q_q(\zeta_{p^{n-r}})$ is totally ramified (Proposition \ref{reps}(iv)), by Hensel's Lemma this happens if and only if $\zeta_{p^n}\in\Q_q(\zeta_{p^{n-r}})$.

If $p^n|q\!-\!1$, then $\zeta_{p^n}\in\Q_q\subseteq \Q_q(\zeta_{p^{n-r}})$, and hence $m_{\Q_q}(\tau)=1$.

Conversely, if $p^n\nmid q\!-\!1$, then $q\bmod p^n$ is a non-trivial element of $p$-power order (since $r>0$ implies $q\equiv 1\bmod p$) in $(\Z/p^n\Z)^\times$. In particular, $\Gal(\Q_q(\zeta_{p^n})/\Q_q)$ contains an element of order $p$. All such elements fix $\zeta_{p^{n-r}}$, and consequently $\Q_q(\zeta_{p^{n-r}})\neq \Q_q(\zeta_{p^n})$. It follows that $\zeta_{p^n}\not\in\Q_q(\zeta_{p^{n-r}})$ and so $m_{\Q_q}(\tau)\neq 1$. It now follows from Remark \ref{schurbound} and Proposition~\ref{reps}(ii) that the Schur index is $m_{\Q_q}(\tau)=p^s$ for some $0<s \leq r$.
\end{proof}

\begin{ack}
The authors would like to thank Tim Dokchitser for the useful discussions that led to this paper. The second author is supported by a Royal Society University Research Fellowship.
\end{ack}

\begin{data}
All relevant results are contained within the article; no supporting data is held elsewhere.
\end{data}

\bibliographystyle{alpha}
\bibliography{schur}

\end{document}